\newtheorem{theorem}{Theorem}[section]
\newtheorem{lemma}[theorem]{Lemma}
\newtheorem{proposition}[theorem]{Proposition}
\newtheorem{corollary}[theorem]{Corollary}
\theoremstyle{definition}
\theoremstyle{remark}
\newtheorem{remark}[theorem]{\bf Remark}
\numberwithin{equation}{section}
\newcommand{\be}{\begin{equation}}
\newcommand{\ee}{\end{equation}}
\def\q{\quad}
\def\qed{\hfill $\square$}
\def\qbar{{{\scriptstyle Q}\kern-.45em{\vrule height.41em width.035em
depth-.03em}}~}
\def\Cbar{\text{\sl C\kern-.35em{\vrule height.63em width.05em
depth-.033em}}~}
\def\cbar{{{\scriptstyle C}\kern-.41em{\vrule height.42em width.035em
depth-.03em}}~}
\def\ibid{\hbox to .5truein{\hrulefill}}
\def\IH{\text{{\rm I}\kern-.13em{\rm H}}}
\def\Z{\mathbb Z}
\def\Qbar{\mathbb Q}
\def\twoheaddown{\downarrow\kern-0.78em\raise0.25em\hbox{$\downarrow$}}
\def\headtaildown{\downarrow\kern-0.79em\raise 0.5em\hbox{$\ssize\curlyvee$}}
\def\longdownarrow{\downarrow\kern-0.73em\raise0.5em\hbox{$\vert$}}
\def\hookdownarrow{\longdownarrow\kern-.12em\raise0.5em\hbox{$^{^\cap}
$}}
\def\vs{\vskip.3cm}
\def\vsk{\vskip.5cm}
\def\noi{\noindent}
\def\pr{\prime}
\def\bs{\backslash}
\def\wh{\widehat}
\def\ol{\overline}
\def\os{\overset}
\def\ul{\underline}
\def\us{\underset}
\def\CF{{\cal F}}
\def\CR{{\cal R}}
\def\dim{\text{\rm dim}}
\def\Aut{\text{\rm Aut}}
\def\Hom{\text{\rm Hom}}
\def\id{\text{\rm id}}
\def\elra{\hbox to 2in{\rightarrowfill}}
\def\ella{\hbox to 2in{\leftarrowfill}}
\def\hrf{\hbox to 2in{\hrulefill}}
\def\hdotfill{\leaders\hbox to 1em{\hss .\hss}\hfill}
\def\be{{\pmb e}}
\def\fp{{\frak p}}
\begin{document}

\title{GALOIS STRUCTURE OF $S$-UNITS}

\author{D. Riveros and  A. Weiss}

\date{} \maketitle

\centerline{To the memory of K.W. Gruenberg}

\centerline{}
\vsk
\begin{abstract}
Let $K/k$ be a finite Galois extension of number fields with Galois
group $G,$ \, $S$ a {\it large} $G$-stable set of primes of $K,$ and
$E$ (respectively $\pmb\mu  )$ the $G$-module of $S$-units of $K,$
(resp. roots of unity).  Previous work using the Tate sequence of
$E$ and the Chinburg class $\Omega  _m$ has shown that the stable
isomorphism class of $E$ is determined by the {\it data} $\Delta 
S,\, \pmb\mu,\, \Omega  _m\,,$ and a special character $\varepsilon  $
of $H^2\big(G,\Hom(\Delta  S,\pmb\mu  )\big).$  This paper explains how to build
a $G$-module $M$ from this data which is stably isomorphic to
$E\oplus \Z  G^n,$ for some integer $n.$
\end{abstract}

\vsk
Let $K/k$ be a finite Galois extension of number fields with Galois
group $G$ and let $S$ be a finite $G$-stable set of primes of $K$
containing all archimedean primes.  Assume that $S$ is {\it large}
in the sense that it contains all ramified primes of $K/k$ and that the
$S$-class group of $K$ is trivial.  Let $E$ denote the $G$-module of
$S$-units of $K$ and $\pmb\mu  $ the roots of unity in $K.$  The purpose
of this paper is to specify the stable isomorphism class of the
$G$-module $E$ in a much more explicit way than in Theorem~B of
\cite{GW2}.

More precisely, and continuing in the notation of \cite{GW2}, we
recall that \cite{T1}, \cite{T2} defines a canonical 2-extension
class of $G$-modules, represented by Tate sequences
$$
0\to E\to A\to B\to 
\Delta  S\to 0,
$$
with $A$ a finitely generated cohomologically trivial $\Z G$-module,
$B$ a finitely generated projective $\Z G$-module and $\Delta  S$
the kernel of the $G$-map $\Z S\to \Z$ which sends every element of
$S$ to $1.$  From this 
\cite{C1} obtains the Chinburg $\Omega  (3)$-class
$$
\Omega  _m:= [A] - [B]
$$
in the locally free class group ${\rm Cl} (\Z G)\subseteq K_0 (\Z
G),$ which is an invariant of $K/k$ that is independent of $S,$ and
conjectures that $\Omega  _m$ equals the root number class in ${\rm
Cl}(\Z G).$ 

The method of \cite{GW2} analyzes the $G$-module $E$ in terms of a
{\it fixed} envelope of $\pmb\mu  .$  This is an exact sequence
\begin{equation}\label{Eq0.1}
0\to \pmb\mu  \to \pmb\omega  \to \ol{\pmb\omega}  \to 0,
\end{equation}
with $\pmb\omega  $ cohomologically trivial and $\ol{\pmb\omega}  $ the $\Z
G$-lattice obtained from $\pmb\omega  $ by factoring by its
$\Z$-torsion.  By Theorem B, the $G$-module $E$ is determined, up to
stable isomorphism, by knowledge of the $G$-set $S,$ the $G$-module
$\pmb\mu  ,$ the Chinburg class $\Omega  _m(K/k)\in \,{\rm Cl}(\Z [G]),$
and an arithmetically defined character
$$
\varepsilon  \in H^2\big(G,\Hom(\Delta  S,\pmb\mu  )\big)^*,
$$
where $\ul{\;\,}^*$ means $\Hom (\,\ul{\;\,},\Qbar/\Z).$

Let $L_1:= \Delta  G\otimes \Delta  S$ and $L_2:= \Delta  G\otimes
L_1$ with $\otimes = \otimes_{\Z}$ and diagonal action by $G.$ 
Choose the envelope $\pmb\omega  $ to be related to the Chinburg class
by the condition
\begin{equation}\label{Eq0.2}
[\pmb\omega  ] - w[\Z G] = \Omega  _m(K/k)\; \text{in}\;{\rm Cl}(\Z G),
\end{equation}
with $\vert  G\vert  w$ equal to the $\Qbar$-dimension of $\Qbar
\otimes \pmb\omega  .$  We will construct a canonical isomorphism
$H^2\big(G,\Hom(\Delta  S,\pmb\mu  )\big)^* \to H^1\big(G,\Hom(\pmb\omega 
,L_2)\big)$ so that our main result is the 

\vs\noi
{\bf Theorem.} {\it 
Let $M=M(\varepsilon  )$ denote the $G$-module in a $\Z$-split
{\rm 1}-extension
$$
0\to L_2\to M\to \pmb\omega  \to 0
$$
with extension class equal to the image $\varepsilon  ^{(1)}$ of
$-\varepsilon  $ in $H^1\big(G,\Hom(\pmb\omega  ,L_2)\big).$  Then
$E\oplus (\Z G)^n$ is stably isomorphic to $M(\varepsilon  ),$ with
$n:= (\vert  G\vert  -2)(\vert  S\vert  -1) + w$ when $G\ne 1.$ }

\vs
This improves Theorem B by explaining {\it how} its data determines
$M,$ a {\it model} for the stable isomorphism class of $E.$  The
remaining problem becomes not only to understand the ingredients
$\Delta  S,\pmb\omega  ,\Omega  _m,\varepsilon  ,n$ of the Theorem,
 but to do so in a way that improves $M$ into a better
approximation of $E.$ As a first example of this, we show how to get
a smaller $n,$ and an $M^\pr,$ in  Corollary \ref{Cor4.1}.  There is also a
continuing discussion on the relation of the Theorem 
with \cite{GW2}, including a Proposition 2.2, and especially on the
role of the distinguished character $\varepsilon,  $ in Remark
\ref{Rem4.6} and Lemma \ref{Lem4.5}.

Our proof of the Theorem,  based on \cite{GW2}, is
presented in three sections: the first recalling relevant results, the
second reformulating the Theorem  in their terms, and the
third containing a proof. The last section discusses some basic
aspects of the many new problems that arise.

\section{Review of \cite{GW2}}\label{Sec1}

Applying $\ul{\,\;}\,\otimes \Delta  S$ to the $(\Z$-split)
augmentation sequence $0\to \Delta  G\to \Z G\to \Z \to 0$ gives the
$(\Z$-split) $G$-module sequence
\begin{equation}\label{Eq1.1}
0\to L_1\to \Z G\otimes \Delta  S\to \Delta  S\to 0,
\end{equation}
with $\Z G\otimes \Delta  S$ a free $\Z G$-module, and $L_1:= \Delta
 G\otimes \Delta  S.$  Applying $\Hom(\,\ul{\;\,}\,,\pmb\mu  )$ to this
gives the exact $G$-module sequence
$$
0\to \,\Hom(\Delta  S,\pmb\mu  ) \to\,\Hom(\Z G\otimes \Delta 
S,\pmb\mu  )
\to\,\Hom(L_1,\pmb\mu  ) \to 0,
$$
inducing the connecting isomorphism in Tate cohomology
\begin{equation}\label{Eq1.2}
\partial  _1: H^1\big(G,\Hom(L_1,\pmb\mu  )\big) \to
H^2\big(G,\Hom(\Delta  S,\pmb\mu  )\big)
\end{equation}
and defining $\varepsilon  _1:=\varepsilon  \circ \partial  _1\in
H^1\big(G,\Hom(L_1,\pmb\mu  ))^*.$

Similarly, applying $\Hom(L_1,\ul{\;\,}\,)$ to our fixed envelope
\eqref{Eq0.1} of $\pmb\mu  $ and then $G$-cohomology gives the
\begin{equation}\label{Eq1.3}
\partial  ^\pr_0: \wh H\,^0\Big(G,\Hom(L_1,\ol{\pmb\omega}  )\big) \to
H^1\big(G,\Hom(L_1,\pmb\mu  )\big),
\end{equation}
and defines $\varepsilon  _0:=\varepsilon  _1\circ \partial 
^\pr_0\in \wh H\,^0\big(G,\Hom(L_1,\ol{\pmb\omega}  )\big)^*.$

We now use the isomorphism
\begin{equation}\label{Eq1.4}
\wh H\,^0\big(G,\Hom(\ol{\pmb\omega},L_1  )\big)\to
H^1\big(G,\Hom(L_1,\ol{\pmb\omega})\big)^*,
\end{equation}
from \eqref{Eq1.2} of loc.cit, that sends $[f]$ to $[f]^*$ with
$[f]^*$ represented by the element 
\newline $g\mapsto (1/\vert  G\vert  )$
trace $(f\circ g) + \Z$ of $\Hom_G(L_1,\ol{\pmb\omega}  )^*.$  It follows
that
\begin{equation}\label{Eq1.5}
\varepsilon  _0 = [f]^*\;\text{for some} \; G\text{-homomorphism}\;
f:\ol {\pmb\omega}  \to L_1\,.
\end{equation}

Extension classes in Tate cohomology are as in \S 11 of \cite{GW1}
(cf. Remark after 11.1): a $\Z$-split 1-extension $(M):0\to X\to
M\to Y\to 0$ of $G$-modules remains exact on applying $\Hom
(Y,\ul{\;\,}\,),$ and the connecting homomorphism
\begin{equation}\label{Eq1.6}
\partial  _{(M)}: \wh H\,^0\big(G,\Hom(Y,Y)\big)\to
H^1\big(G,\Hom(Y,X)\big)
\end{equation}
on its $G$-cohomology allows the definition $\xi  _{(M)} := \partial
 _{(M)}(\id_Y) \in H^1\big(G,\Hom(Y,X)\big)$ of the extension class
of $(M).$  Note that $(M)\mapsto \xi  _{(M)}$ induces a bijection
between the set of equivalence classes of $\Z$-split 1-extensions $(M)$ and
$H^1\big(G,\Hom(Y,X)\big).$

The notational deviation $L_1,\varepsilon  _1$ from the
$L,\varepsilon  $ of \cite{GW2} in \eqref{Eq1.1} is intended to
separate the role of $\varepsilon  _1\,$ which is at the centre of
the envelope focus of loc.cit. (so {\it every} $\varepsilon  $ after
the first two pages there is now $\varepsilon  _1),$ from that of
the more fundamental $\varepsilon  .$ The basic idea, only partially
realized by Theorem~B, is to use the homotopy class $[f]$ to
`reconstruct' $E:$ the formation in Proposition~5.1 of the
`homotopy' kernel $M^\pr$ of $f_0$ doesn't 
provide a description of
$M^\pr.$  This defect
is here addressed by using extension classes.

We will use, near \eqref{Eq3.5}, the notation $[L_1,N] = \wh
H\,^0\big(G,\Hom(L_1,N)\big)$ from (5.1) of \cite{GW1} to evoke the
homotopy language. Given an envelope $(C): 0\to M\to C\to L_1\to 0,$
with $\Z$-torsion $j: \pmb\mu\hookrightarrow M  ,$ applying
$\Hom(L_1,\ul{\;\,}\,)$ and $G$-cohomology gives an isomorphism
\begin{equation}\label{Eq1.7}
\partial  _{(C)}:[L_1,L_1]\to H^1\big(G,\Hom(L_1,M)\big),
\end{equation}
of right $[L_1,L_1]$-modules.  Then $\tau  _1\partial 
_{(C)}^{-1}j_*$ is in $H^1\big(G,\Hom(L_1,\pmb\mu  )\big)^*$ and we say,
following \eqref{Eq1.6} of \cite{GW2}, that $(C)$ is {\it linked} to
its $\Aut_G(\pmb\mu  )$-orbit.  This orbit is here insensitive to
the choice of $j,$ because $\Aut_G(\pmb\mu  ) =
\,\Aut(\pmb \mu)$ since $\pmb\mu  $  cyclic implies that
$\Aut(\pmb\mu)$ is abelian.

\section{Reformulation}\label{Sec2}

First, applying $\ul{\;\,}\,\otimes L_1$ to the augmentation
sequence, as in \eqref{Eq1.1}, gives a $\Z$-split $G$-module sequence
\begin{equation}\label{Eq2.1}
0\to L_2\to \Z G\otimes L_1\os{p_{_1}}\to L_1\to 0,
\end{equation}
with $\Z G\otimes L_1$\, $\Z G$-free and $L_2:=\Delta  G\otimes
L_1\,.$  Thus applying $\Hom(\pmb \omega  ,\,\ul{\;\,}\,),$ as in
\S1, and then $G$-cohomology gives the connecting isomorphism
\begin{equation}\label{Eq2.2}
\delta  _0: \wh H\,^0\big(G,\Hom(\pmb \omega  ,L_1)\to
H^1(G,\Hom(\pmb\omega  ,L_2)\big).
\end{equation}

Our reformulation  starts from the trivial observation that the
$G$-map $\pmb\omega  \to \ol{\pmb\omega}$ of \eqref{Eq1.1} induces
an equality of the functors $\Hom(\ol{\pmb\omega  },\,\ul{\;\,}\,)
\to\Hom(\pmb\omega  ,\,\ul{\;\,}\,)$ on $\Z G$-lattices $X.$  Then
\begin{equation}\label{Eq2.3}
\wh H\,^0 \big(G,\Hom(\ol{\pmb\omega  }, L_1)\big) =\wh
H\,^0\big(G,\Hom(\pmb \omega  ,L_1)\big)
\end{equation}
allows us to rewrite \eqref{Eq1.4} as an isomorphism
\begin{equation}\label{Eq2.4}
\wh H\,^0\big(G,\Hom(\pmb\omega  ,L_1)\big) \to \wh
H\,^0\big(G,\Hom(L_1, \ol{\pmb\omega}  )\big)^*
\end{equation}
that sends $[h]$ to $[h]^*$ with $[h]^*$ represented by the element
$g\mapsto (1/\vert  G\vert  )\,{\rm trace}\,(\ol h \circ g) + \Z,$
of $\Hom_G(L_1, \ol{\pmb\omega  })^*.$   It follows that
\begin{equation}\label{Eq2.5}
\varepsilon  _0 = [h]^* \;\text{\rm for some}\; h\in \Hom(\pmb\omega
 , L_1)^G. \end{equation}

We now define the isomorphism before 
the Theorem  of the introduction to be the composition of
the isomorphisms
\begin{equation}\label{Eq2.6}
\begin{aligned}
H^2\big(G,\Hom(\Delta  S,\pmb\mu  )\big)^* &\to
H^1\big(G,\Hom(L_1,\pmb\mu  )\big)^* \to \wh
H\,^0\big(G,\Hom(L_1,\ol{\pmb\omega}  )\big)^*\\
&\gets \wh H\,^0 \big(G,\Hom(\pmb\omega  ,L_1)\big) \to
H^1\big(G,\Hom(\pmb\omega  , L_2)\big)
\end{aligned}
\end{equation}
of \eqref{Eq1.2}$^*$, \eqref{Eq1.3}$^*$, \eqref{Eq2.4},
\eqref{Eq2.2}, and observe that it takes $-\varepsilon  $ to
$-\delta  _0([h]).$

It follows that $\varepsilon  ^{(1)} = -\delta  _0([h])$ in the
statement of the Theorem of the introduction, which is therefore
equivalent to the following reformulation.

\begin{theorem}\label{Thm2}
Let $[h]\in \wh H\,^0 \big(G,\Hom(\pmb \omega  ,L_1)\big)$ be the
image of $\varepsilon  $ under the composite of the first three maps
in \eqref{Eq2.6}, and let $\delta  _0$
 be the last map of that composite, as in \eqref{Eq2.2}.  Let $M$ be the
$G$-module in a $\Z$-split {\rm 1}-extension
$$ 
0 \to L_2 \to M\to \pmb\omega  \to 0
$$
with extension class equal to  $-\delta  _0([h])$ in
$H^1\big(G,\Hom(\pmb\omega  ,L_2)\big).$  Then $E\oplus (\Z G)^n$ is
stably isomorphic to $M,$ with $n:= (\vert  G\vert  -2)(\vert  S\vert
 -1) + w$ when
$G\ne 1.$

In particular, the class $\varepsilon  $ and the
extension class of $M(\varepsilon  )$ determine each other uniquely.
\end{theorem}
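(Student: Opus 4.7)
The plan is to extract an explicit $G$-homomorphism $h : \pmb\omega \to L_1$ from the given $\varepsilon$, to realize $M$ as a pullback of the $\Z$-split resolution (2.1) along $h$, and finally to compare $M$ with the homotopy-kernel construction of \cite{GW2} in order to invoke Theorem~B of loc.~cit.~and deduce the stable isomorphism with $E \oplus (\Z G)^n$.

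First I would choose $h$ representing the class $[h] \in \wh H\,^0(G, \Hom(\pmb\omega, L_1))$ given by (2.5); this class is determined by $\varepsilon$ via the first three isomorphisms in (2.6). Because the middle term $\Z G \otimes L_1$ in (2.1) is $\Z G$-free, pulling back (2.1) along $h$ yields a $\Z$-split 1-extension
$$0 \to L_2 \to M \to \pmb\omega \to 0,$$
whose extension class can be read off directly from the definition of the connecting map $\delta_0$ of (2.2), and equals $\delta_0([h])$ up to the sign convention adopted for $\xi_{(M)}$ in (1.6). A standard normalization produces the requested class $-\delta_0([h])$, so such an $M$ exists and is, up to equivalence of extensions, uniquely determined by $[h]$ and hence by $\varepsilon$.

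The central task is then to show that this $M$ is stably isomorphic to $E \oplus (\Z G)^n$. Here I would compare $M$ with the homotopy kernel of a lift $f_0 : \pmb\omega \to \Z G \otimes L_1$ of the map $f : \ol{\pmb\omega} \to L_1$ of (1.5). By (2.3) the data $f$ and $h$ are interchangeable, and both the pullback $M$ and the homotopy kernel are realized, up to cohomologically trivial summands absorbed by the stable isomorphism, by the same fiber product $\pmb\omega \times_{L_1}(\Z G \otimes L_1)$. Proposition~5.1 of \cite{GW2} identifies its stable class with $E$ plus a free $\Z G$-module, and the integer $n$ is pinned down by a rank count using $\rank_\Z \pmb\omega = |G|w$, $\rank_\Z L_2 = (|G|-1)^2(|S|-1)$, and $\rank_\Z E = |S|-1$, so that $(\rank_\Z M - \rank_\Z E)/|G| = w + (|G|-2)(|S|-1) = n$.

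The main obstacle is this comparison: showing that the pullback $M$ agrees, up to free $\Z G$-summands, with \cite{GW2}'s homotopy-kernel object, so that the extension class $-\delta_0([h])$ correctly transports the arithmetic invariant $\varepsilon$ attached to $E$ in Theorem~B. Once that identification is in place, the ``in particular'' uniqueness statement is immediate, because every arrow in (2.6) is an isomorphism and, by the remark after (1.6), $\Z$-split 1-extensions $(M)$ correspond bijectively to their extension classes $\xi_{(M)}$.
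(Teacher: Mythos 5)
You have correctly reconstructed the module: your fiber product $\pmb\omega \times_{L_1} (\Z G \otimes L_1)$ is exactly the paper's $M = \ker(\eta)$, where $\eta\big((x,y)\big) = p_1(x) + h(y)$ in the big diagram \eqref{Eq3.1}, and the paper confirms by an explicit cocycle computation (which your ``standard normalization'' elides; the sign works out because the kernel of $\eta$ is the pullback along $-h$, not $h$) that the top row has extension class $-\delta_0([h])$. But the heart of the theorem is the stable isomorphism $E \oplus (\Z G)^n \approx M$, and here you have named the missing step rather than supplied it: the ``comparison'' you defer as the main obstacle is precisely what the body of the paper's proof consists of. Routing it through Proposition~5.1 of \cite{GW2} cannot work as stated --- the paper itself points out that this proposition merely forms the homotopy kernel without describing it, and it does not by itself identify the stable class of your pullback with $E$ plus free summands. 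What the paper actually invokes is Theorem~4.7 of \cite{GW2}: two envelopes with $\Z$-torsion $\pmb\mu$ and lattice $L_1$ have stably isomorphic left ends \emph{provided} two conditions hold, and verifying those conditions for the stabilized Tate envelope \eqref{Eq3.2} and the middle column \eqref{Eq3.3} of \eqref{Eq3.1} is the substance of the proof.

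Neither condition is addressed in your proposal. The first is the equality $[(\Z G \otimes L_1) \oplus \pmb\omega] = [(\Z G)^n \oplus A]$ in ${\rm Cl}(\Z G)$; this is where the normalization \eqref{Eq0.2} of $\pmb\omega$ against the Chinburg class and Proposition \ref{Prop2.7} (a Tate envelope has $\Omega_m = [A] - (\vert S\vert - 1)[\Z G]$) enter --- yet neither \eqref{Eq0.2} nor $\Omega_m$ appears anywhere in your argument. Your rank count is arithmetically correct, but it only pins down the integer $n$ and says nothing about classes in ${\rm Cl}(\Z G)$; in particular, your claim that the two constructions agree ``up to cohomologically trivial summands absorbed by the stable isomorphism'' is false in general, since stable isomorphism absorbs only \emph{free} summands and a projective summand with nontrivial class cannot be absorbed. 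The second condition is that both envelopes are linked to the same $\Aut_G(\pmb\mu)$-orbit $\varepsilon_1 \Aut_G(\pmb\mu)$ on $H^1\big(G,\Hom(L_1,\pmb\mu)\big)^*$: for \eqref{Eq3.2} this comes from the trace character $t_E$ of the Tate envelope, and for \eqref{Eq3.3} it requires the diagram \eqref{Eq3.5} together with the argument that $\theta\,\partial_{(\pmb\omega)} = [\id_{L_1},\ol h]$ using the right $[L_1,L_1]$-module structure and \eqref{Eq1.4}. This linking computation is exactly what shows that the class $[h]$ manufactured from $\varepsilon$ transports the arithmetic character attached to $E$; without it, your assertion that the extension class ``correctly transports the arithmetic invariant'' is a restatement of the theorem, not a proof of it.
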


The envelope focus of \cite{GW2} overemphasizes $\varepsilon  _1$
for our purposes.  We eventually need to restate Theorem A in terms
of $\varepsilon:$ see Remark \ref{Rem4.6}.  The connection between
$\varepsilon  $ and $\varepsilon  _1$ is a consequence of the
relationship between Tate sequences and Tate envelopes, or, more
precisely, between the Tate canonical class $\alpha  _3 \in
H^2\big(G,\Hom(\Delta  S,E)\big)$ and Tate envelopes.  Thus,
following the last four paragraphs of Tate's proof of Theorem~5.1 of
Chapter~2 in \cite{T2}, we select a {\it special} Tate sequence
representing $\alpha  _3$ and {\it define} the Tate envelope to be
the left half of this special Tate sequence.

\begin{proposition}\label{Prop2.7}
A Tate envelope $0\to E\to A\to L_1 \to 0$ has
$$
\Omega  _m = A-(\vert  S\vert   -1 )[\Z G] \q\text{\rm in}\q {\rm
Cl}(\Z G).
$$
\end{proposition}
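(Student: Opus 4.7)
The plan is to unpack what ``special'' means in the definition of the Tate envelope and then to reduce the Proposition to a short bookkeeping identity in $K_0(\Z G)$. According to the paragraph preceding the statement, a Tate envelope is the left half of a special Tate sequence produced by Tate's construction in the last four paragraphs of the proof of Theorem~5.1 of Chapter~2 of \cite{T2}. That construction realises the canonical class $\alpha_3 \in H^2\bigl(G,\Hom(\Delta S, E)\bigr)$ by splicing the envelope $0 \to E \to A \to L_1 \to 0$ with a short exact sequence
$$
0 \to L_1 \to B \to \Delta S \to 0
$$
coming from a chosen projective presentation of $\Delta S$. The natural (and, by Tate's argument, sufficient) choice is the free resolution \eqref{Eq1.1}, namely $0 \to L_1 \to \Z G \otimes \Delta S \to \Delta S \to 0$, whose middle term is $\Z G$-free of rank $\vert S \vert - 1$ (the $\Z$-rank of $\Delta S$).

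Once this identification is made, the Proposition is immediate. With $B = \Z G \otimes \Delta S$ free of rank $\vert S \vert - 1$, one has $[B] = (\vert S \vert - 1)[\Z G]$ in $K_0(\Z G)$, and substituting into the defining formula $\Omega_m = [A] - [B]$ gives
$$
\Omega_m \;=\; [A] - (\vert S\vert - 1)[\Z G]
$$
in $K_0(\Z G)$, and hence in the quotient $\mathrm{Cl}(\Z G)$.

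The main obstacle I anticipate is checking that Tate's cited construction genuinely yields $B \cong \Z G \otimes \Delta S$ and, crucially, that the cohomological triviality of $A$ required of a Tate sequence is preserved when the right-hand splice is taken to be the free resolution \eqref{Eq1.1}; this is exactly the content of the last four paragraphs of Tate's proof, and amounts to verifying that the 2-cocycle representing $\alpha_3$ can be pulled back along the surjection $\Z G \otimes \Delta S \twoheadrightarrow \Delta S$ to produce a cohomologically trivial middle term. Even if one only obtained a $B$ stably free of rank $\vert S \vert - 1$ rather than free, the equation would still hold in $\mathrm{Cl}(\Z G)$, since stably free modules vanish there; so at worst a mild refinement of Tate's argument suffices, and the Proposition then follows.
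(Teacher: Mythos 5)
Your proposal is correct and takes essentially the same route as the paper: the paper's proof likewise arranges Tate's construction so that the right-hand half of the special Tate sequence is \eqref{Eq1.1}, giving $B=\Z G\otimes\Delta S\simeq (\Z G)^{\vert S\vert -1}$, and then substitutes into the defining formula $\Omega_m=[A]-[B]$. The only cosmetic difference is direction --- the paper starts from the four-term sequence obtained by splicing \eqref{Eq1.1} and \eqref{Eq2.1} and runs Tate's four paragraphs (adding a free module $F$ only to the \emph{second} term $B^\pr$, so $B$ itself stays genuinely free and your stably-free fallback is never needed) to produce the cohomologically trivial $A,$ rather than splicing a given envelope onto \eqref{Eq1.1}.
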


\begin{proof}
We specialize Tate's initial exact sequence by selecting the one
\begin{equation}\label{Eq2.7}
0 \to L_2 \to B^\pr \to B \to \Delta  S \to 0,
\end{equation}
obtained by splicing \eqref{Eq1.1} and \eqref{Eq2.1}; Tate's 
first paragraph ends with isomorphisms
$$
\wh H\,^r\big(G,\Hom(L_2,E)\big) \simeq \wh
H\,^{r+2}\big(G,\Hom(\Delta  S,E)\big),
$$
for all $r\in \Z,$ in our notation.  The second paragraph chooses
$\alpha  \in\Hom_G(L_2\,,E)$ corresponding to $\alpha  _3\in
H^2\big(G,\Hom(\Delta  S,E)\big)$ and deduces, from his (5.2), that
$\alpha  $ induces isomorphisms $\wh H\,^r(G,L_2) \to \wh
H\,^r(G,E),$ for all $r;$ the third paragraph extends $\alpha  $
to a surjective $\alpha  :L_2\oplus F\to E,$ with $F$ free, and
replaces $L_2\to B^\pr$ in \eqref{Eq2.7} by $L_2\oplus F\to B^\pr
\oplus F$ to get a new \eqref{Eq2.7} and the exact sequence $0\to
\,\ker (\alpha  )\to L_2\oplus F\to E\to 0.$  The fourth paragraph
deduces that $\ker (\alpha  ),$ and thus $A:= (B^\pr \oplus F)/\ker
(\alpha  ),$ is cohomologically trivial.  Combining with the new
\eqref{Eq2.7} gives the Tate sequence $0\to E\to A\to B\to\Delta 
S\to 0,$ the left half $0\to E\to A\to L_1\to 0$ of which is our
Tate envelope.

Now $B=\Z G\otimes \Delta  S\simeq (\Z G)^ {\vert  S\vert 
-1}$
 implies
that $\Omega  _m =[A]-[B]= [A] - ( \vert  S\vert  -1)[\Z G].$
\end{proof}

\section{Proof of the reformulated Theorem}\label{Sec3}
The proof is now straightforward.  We assume that $G\ne 1$ (since
the $G=1$ case, while true with the obvious interpretation, is
trivial), and start by fixing an envelope
$$
0\to \pmb\mu  \to \pmb \omega  \to \ol{\pmb\omega  } \to 0,
$$
satisfying \eqref{Eq0.1} and \eqref{Eq0.2}.  The existence of such
an $\pmb \omega  $ follows from \eqref{Eq2.1} in \cite{GW1} and
(39.12), (32.13) in \cite{CR}:  start with any envelope $0\to
\pmb\mu  \to C\to \ol C \to 0,$ define $c$ by $\vert  G\vert  c
=\,\dim \, \Qbar \otimes C,$ and observe that $\Omega  _m - ([C] -
c[\Z G]) = [P] - [\Z G]$ in ${\rm Cl}(\Z G),$ for some projective
$\Z G$-module $P$ with $\dim\, \Qbar \otimes P = \vert  G\vert  ,$
hence $C^\pr := C\oplus P$ gives an envelope $0\to \pmb\mu  \to C^\pr
\to \ol{(C^\pr)} \to 0$ with $\Omega  _m = [C^\pr] -c^\pr [\Z G],$
as required.

Letting $[h],$ with $h\in \,\Hom(\pmb \omega  ,L_1)^G,$ be as in the
assertion of  Theorem \ref{Thm2}, define 
\newline $\eta  :(\Z G\otimes
L_1)\oplus \pmb\omega  \to L_1$ by $\eta  \big((x,y)\big) = p_1(x) +
h(y),$ and form the big diagram
\begin{equation}\label{Eq3.1}
\begin{array}{cccccccccc}
&&0&&0\\
&&\downarrow &&\downarrow\\
0 &\longrightarrow &L_2 &\lhook\joinrel\longrightarrow &\ker\,(\eta  )
&\os{p_0}\longrightarrow &{\pmb\omega  } &\longrightarrow &0\\
&&\hookdownarrow &&{}\,\hookdownarrow&&\Vert  \\
0 &\to&\Z G\otimes L_1 &\hookrightarrow &(\Z G\otimes L_1) \oplus
\pmb\omega &\to &\pmb\omega   &\to &0  \\
&&{}\;\longdownarrow{} ^{{p_1}} &&{}\;\longdownarrow{}\,^{\eta}
\\
&&L_1 &\os
{\hbox to .25in{\hrulefill}}{\hbox to.1in{\hrulefill}} &L_1\\
&&\downarrow &&\downarrow\\
&&0 && 0
\end{array}
\end{equation}
as follows: start from the commutative square containing $p_1$ and
$\eta  ,$ use it to form the bottom two rows with the additional map
sending $(x,y)$ to $y,$ and then get the top row by taking kernels,
and using \eqref{Eq2.1} as the first column.  We put
$M:=\,\ker\,(\eta  )$ and focus first on the column and then on the
row containing $M.$

Now let $0\to E\to A\to L_1\to 0$ be a fixed Tate envelope, and form
the envelope
\begin{equation}\label{Eq3.2}
0 \to (\Z G)^n \oplus E \to (\Z G)^n\oplus A\to L_1\to 0,
\end{equation}
from it by adding $(\Z G)^n = (\Z G)^n.$  This is an envelope with
$\Z$-torsion $\pmb\mu  $ and lattice $L_1\,,$ as is the middle column
\begin{equation}\label{Eq3.3}
0\to M\to (\Z G\otimes L_1)\oplus \pmb\omega  \to L_1\to 0,
\end{equation}
of \eqref{Eq3.1}.  We now apply Theorem 4.7 of \cite{GW2} to show
that the left ends of these envelopes are stably isomorphic.  This
requires two conditions to be verified.

The quicker condition to check is that $[(\Z G\otimes L_1) \oplus
\pmb\omega  ]$ is equal to $[(\Z G)^n\oplus A]$ in ${\rm Cl}(\Z G).$
 Now $\Z G\otimes L_1\simeq (\Z G)^{(\vert  G\vert  -1)(\vert  S\vert 
-1)},$ because it's $\Z G$-free; and \eqref{Eq0.2} applies to
$[\pmb\omega  ],$ hence
\newline  $[(\Z G\otimes L_1)\oplus \pmb\omega  ] =
(\vert  G\vert  -1)(\vert  S\vert  -1)[\Z G] + w[\Z G] +\Omega 
_m\,.$  Similarly, the second expression equals $n[\Z G] + (\vert 
S\vert  -1)[\Z G] + \Omega  _m\,,$ by Proposition \ref{Prop2.7}.  These agree by
the choice of $n.$

The other condition is that both of these envelopes are linked to
the {\it same} $\Aut_G(\pmb \mu  )$-orbit on
$H^1\big(G,\Hom\,(L_1,\pmb\mu  )\big)^*,$ which we will show is
$\varepsilon  _1\,\Aut_G(\pmb \mu  ).$

First, by definition, the Tate envelope is linked to $\tau 
_1\partial  _{(A)}^{-1}j_*;$ and with $j:\pmb\mu  \hookrightarrow E$ the inclusion,
which is $t_Ej_*$ by definition of the trace character $t_E$ in \S
7, i.e the `restriction' $\varepsilon  _1$ of $t_E$ to
$H^1\big(G,\Hom\,(L_1,\pmb\mu  )\big).$  To get the same
conclusion for the envelope \eqref{Eq3.2}, consider the commutative
diagram defined by inclusion of the Tate envelope into
\eqref{Eq3.2}, and apply $\Hom\,(L_1,\,\ul{\;\,}\,)$ and
$G$-cohomology to get the commutative square, with all maps
isomorphisms, inside the commutative diagram
$$
\begin{array}{rccclc}
&\kern-1em H^1\big(G,\Hom\,(L_1\,,E)\big) &\os{\partial  _{(A)}}\longleftarrow
 &\kern-.3em\wh H\,^0\big(G,\Hom\,(L_1,L_1)\big)\\
{}\q\q\os{j_*\,}\nearrow &&&&\os{^{\tau  _1}}\searrow\\
H^1\big(G,\Hom\,(L_1,\pmb\mu  )\big) &\kern-1em\longdownarrow \,^{\simeq}
&&\kern-.3em\Vert  &{}\q \Qbar/\Z\\
{}\q\q\us{j_*}\searrow &&&&\us{\,\tau  _1}\nearrow\\
&\kern-1em H^1\big(G,\Hom\,(L_1,(\Z G)^n \oplus
E)\big)&\us{\partial  _{((\Z G)^n\oplus A)}}\leftarrow
&\kern-.3em\wh
H\,^0\big(G,\Hom\,(L_1,L_1)\big)
\end{array}
$$

\vs\noi
with left triangle from composing the inclusions $\pmb\mu 
\hookrightarrow E$
and $E\hookrightarrow (\Z G)^n \oplus E.$  The top composite from
$H^1\big(G,\Hom\,(L_1,\pmb\mu  )\big)$ to $\Qbar/\Z$ is equal to
$\varepsilon  _1\,,$ by the first sentence of this paragraph, hence
so is the bottom one.

Next, to see that the envelope \eqref{Eq3.3} is linked to $\varepsilon 
_1\,,$ consider the commutative diagram
$$
\begin{array}{rcccccl}
0 \rightarrow &\pmb\mu  &\rightarrow &\pmb\omega    &\rightarrow
&\ol{\pmb\omega    } &\rightarrow 0\\
&{}\,\longdownarrow \,^{j^\pr} &&{}\,\longdownarrow\, ^k
&&{}\,\longdownarrow\,^{\ol h}\\
0 \rightarrow &M &\hookrightarrow &C &\os \eta\rightarrow &L_1 &\rightarrow 0
\end{array}
$$
with top row the envelope $(\pmb\omega  )$ of \eqref{Eq0.1},
\eqref{Eq0.2}, bottom row the vertical envelope $(C)$ of
\eqref{Eq3.1} with $C= (\Z G\otimes L_1)\oplus \pmb\omega  ,$ and
$k(y) = (0,y)$ for all $y\in \pmb\omega  .$ 
Here, forming the right square first defines $j^\pr.$
Applying
$\Hom\,(L_1\,,\,\ul{\,\;}\,)$ and
 $G$-cohomology gives the commutative
square
\begin{equation}\label{Eq3.5}
\begin{array}{lcc}
[L_1,\ol{\pmb\omega  }] &\os{\partial  _{(\pmb\omega)}}
\longrightarrow &H^1\big(G,\Hom\,(L_1,\pmb\mu  )\big)\\
{}\q \longdownarrow\, ^{[\id_{L_1},\ol h]} &&\longdownarrow\,
^{(j^\pr)_*}\\
{}[L_1,L_1] 
&\us{\partial  _{(C)}}\longrightarrow
&H^1\big(G,\Hom\,(L_1,M)\big)
\end{array}
\end{equation}
with horizontal isomorphisms and $(C)$ linked to $\tau  _1\partial 
_{(C)}^{-1}(j^\pr)_* \in H^1\big(\Hom\,(L_1,\pmb\mu  )\big)^*,$ by
the definition \eqref{Eq1.7}, with $\tau  _1:[L_1,L_1] \to
\Qbar/\Z.$  Our hypothesis on $[h]$ implies the $[\ol h]^*
=\varepsilon  _1\partial  ^\pr_0$ by \eqref{Eq2.5}, \eqref{Eq1.5}
and \eqref{Eq1.3}, with $\partial ^ \pr_0 = \partial  _{(\pmb\omega 
)}\,,$ i.e. $[\ol h]^* =\varepsilon  _1\partial  _{(\pmb\omega 
)}\,.$

Now, quoting \cite{GW2}, $\varepsilon  _1\in H^1\big(G,\Hom\,(L_1,\pmb\mu
 )\big)^*$ implies that $\varepsilon  _1 = \tau  _1\theta  $ for
some right $[L_1,L_1]$-homomorphism $\theta 
:H^1\big(\Hom\,(L_1,\pmb\mu  )\big)\to [L_1,L_1],$ by \eqref{Eq1.3}.
 Then $\theta  \partial  _{(\pmb\omega  )}$ is a right
$(L_1,L_1]$-homomorphism: $[L_1,\ol{\pmb\omega  }]\to [L_1,L_1]$ so
that $[\ol h]\in [\ol{\pmb\omega  }, L_1]$ having $[\ol h]^*=
\varepsilon  _1\partial  _{(\pmb\omega  )} = \tau  _1\theta 
\partial  _{(\pmb\omega  )}\,,$ by the previous paragraph, implies
that $\theta  \partial  _{(\pmb\omega  )} = [\id_{L_1}\,,\ol h],$ by
\eqref{Eq1.4}.

Combining with \eqref{Eq3.5} above gives $\tau  _1\partial 
_{(C)}^{-1}(j^\pr)_* =\tau  _1[\id_{L_1}, \ol h]\partial 
_{(\pmb\omega  )}^{-1} = \tau _1\theta  =\varepsilon  _1\,,$ as
required.

Finally, we must show that the top row
$$
(M) : 0\to L_2\hookrightarrow M\to \pmb\omega  \to 0
$$
of the big diagram \eqref{Eq3.1} has extension class $-\delta 
_0([h]),$ in the notation of \eqref{Eq1.6}.

To get a 1-cocycle representing $-\delta  _0([h]),$ one applies
$\Hom\,(\pmb\omega  ,\,\ul{\,\;}\,)$ to \eqref{Eq2.1}, getting the
exact sequence $0\to \Hom\,(\pmb\omega  ,L_2)\to \Hom\,(\pmb\omega 
,\Z G\otimes L_1)\to \Hom\,(\pmb\omega  ,L_1)\to 0,$ chooses a
pre-image of $h$ in $\Hom\, (\pmb\omega  ,\Z G\otimes L_1),$ say the
map $1\otimes h$ taking every $y\in \pmb\omega  $ to $1\otimes
h(y),$ and then forms the 1-cocycle $g\mapsto (1\otimes h) -
g(1\otimes h)$ (with $g\in G)$ taking values in $\Hom\,(\pmb\omega 
,L_2),$ namely 
${}[(1\otimes h)-g\!\cdot(1\otimes h)](y) 
= (1\otimes h)(y) -g\!\cdot (1\otimes h)(g^{-1}y)
= 1\otimes h(y)-g\!\cdot\big(1\otimes h(g^{-1}y)\big)
= 1\otimes h(y)-g\otimes g\!\cdot\! h(g^{-1}y)
= 1\otimes h(y) -g\otimes h(y) = (1-g)\otimes h(y)\in 
\Delta  G\otimes L_1=L_2\,.$

On the other hand, the extension class $\xi  _{(M)} $ of $(M)$ is,
by definition, obtained from $(M) $ by applying $\Hom\,(\pmb\omega 
,\,\ul{\,\;}\,)$ to $(M),$ getting $0\to \Hom\,(\pmb\omega 
,L_2)\to\Hom\,(\pmb\omega  ,M)\to \Hom\,(\pmb\omega  ,\pmb\omega 
)\to 0,$ lifting $\id_{\pmb\omega  }$ to some $s\in
\Hom\,(\pmb\omega  ,M),$ and forming the class of the 1-cocycle
$g\mapsto gs-s$ with 

\newpage
\noi
values in $\Hom\,(\pmb\omega  ,L_2).$  Setting
$s(y) = \big(-1\otimes h(y),y\big)$ works, since $\eta \big(s(y)\big) 
= p_1\big(-1\otimes h(y)\big) + h(y) =0$ and
$p_0\big(s(y)\big) = y.$ Now $(gs-s)(y) = g\big(-1\otimes
h(g^{-1}y), g^{-1}y\big) - \big(-1\otimes h(y),y\big) =
\big(-g\otimes h(y),y\big) + \big(1\otimes h(y),-y\big) =
\big((1-g)\otimes h(y),0\big),$ which is the image of $(1-g)\otimes
h(y)\in L_2\,.$  This agrees with the 1-cocycle of the previous
paragraph.
\qed

\section{Discussion}\label{Sec4}

We begin with a consequence of the Theorem, for which
we prepare with a naturality property of the Gruenberg resolution. 
We start with a subset, of $d$ elements $g_i$ of $G\bs\{1\},$ which
{\it generates} $G,$ form the free group $F$ on $x_i\,,$ \,$1\le
i\le d,$ and define the relation module $R_d$ by the exact sequence 
\begin{equation}\label{Eq4.1}
0\to R_d\to \Z G\otimes_{\Z F}\Delta  F\to \Delta  G\to 0
\end{equation}
(cf. \cite{HS} p.~199 and 218). Here the middle term is $\simeq (\Z G)^d$
since $\Delta  F$ is $\Z F$-free on the $(x_i-1)$'s, and the right
map sends the $\Z G$-basis $1\otimes_F (x_i-1)$ to $g_i-1.$

In the special case $d=\vert  G\vert  -1,$ write $\CR, \CF$ for
$R_d,\,F$ respectively. For general $d,$ the inclusion $F\to \CF$
induces a map from the relation sequence for $R_d$ to $\CR,$ which
on middle terms is an inclusion of the respective $\Z G$-bases so
has cokernel $\simeq (\Z G)^{\vert  G\vert  -1-d},$ yielding the
exact sequence $0\to R_d \to \CR \to (\Z G)^{\vert  G\vert  -1-d}\to
0$ on the left terms.

Similarly, the relation module sequence for $\CR$ maps to the exact
sequence obtained by applying $\ul{\,\;}\, \otimes \Delta  G$ to the
augmentation sequence, with middle map matching  $\Z G$-bases by
$1\otimes_{\cal F} (x_i-1)\mapsto 1\otimes (g_i-1),$ inducing an
isomorphism $\CR\to \Delta  G\otimes \Delta  G.$  This implies that
\begin{equation}\label{Eq4.2}
0\to R_d \os\beta  \rightarrow \Delta  G\otimes \Delta  G\to (\Z
G)^m \to 0
\end{equation}
is exact with an  explicit map $\beta  $ and $m=\vert  G\vert 
-1-d,$ when $G\ne 1.$

Let $d(G)$ be the minimal number of generators of $G,$ and set
$R:=R_{d(G)}\,,$ to state the

\begin{corollary}\label{Cor4.1}
There is an explicit $G$-homomorphism $\beta  ^\pr :R\otimes \Delta 
S\to L_2$ so that the induced isomorphism $\beta  ^\pr
_*: H^1\big(G,\Hom(\pmb \omega  ,R\otimes \Delta  S)\big)\to
H^1\big(G,\Hom\,(\pmb \omega  ,L_2)\big)$ has the following
property: let $M^\pr$ be the $G$-module in a $\Z$-split {\rm 1}-extension
$$
0\to R\otimes \Delta  S\to M^\pr \to \pmb\omega  \to 0
$$
with extension class mapping to $\varepsilon  ^{(1)}$ under $\beta 
^\pr_*.$  Then $E\oplus (\Z G)^{n^\pr}$ is stably isomorphic to
$M^\pr$ with $n^\pr = \big(d(G) -1\big)(\vert  S\vert  -1) + w$ when
$G\ne 1.$
\end{corollary}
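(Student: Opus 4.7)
The plan is to construct $\beta^\pr$ from the map $\beta$ of \eqref{Eq4.2} by tensoring with $\Delta S,$ and then to reduce the Corollary to Theorem \ref{Thm2} via a pushout argument.

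First I apply $\ul{\,\;}\otimes \Delta S$ to \eqref{Eq4.2}.  Because $\Delta S$ is $\Z$-free, exactness is preserved, and since $(\Delta G\otimes \Delta G)\otimes \Delta S = L_2$ by definition of $L_1, L_2,$ this produces the $\Z$-split sequence
$$
0\to R\otimes \Delta S \os{\beta^\pr}\longrightarrow L_2 \longrightarrow (\Z G)^m\otimes \Delta S \to 0
$$
with $m=\vert G\vert -1-d(G)$ and $\beta^\pr := \beta\otimes \id_{\Delta S}.$  The standard isomorphism $g\otimes x\mapsto g\otimes g^{-1}x$ identifies $\Z G\otimes N$ (with diagonal $G$-action) with a $\Z G$-free module of rank equal to the $\Z$-rank of $N,$ for any $\Z$-free $\Z G$-module $N;$ thus the cokernel $(\Z G)^m\otimes \Delta S$ is $\Z G$-free of rank $m(\vert S\vert -1)$ and the displayed sequence $\Z G$-splits.

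Next I verify that $\beta^\pr_*$ is an isomorphism on $H^1\bigl(G,\Hom(\pmb\omega,\ul{\,\;})\bigr).$  Since $\pmb\mu$ is finite and $\Z G$ is $\Z$-torsion-free, $\Hom_\Z(\pmb\mu,\Z G)=0;$ since $\ol{\pmb\omega}$ is $\Z$-free, $\mathrm{Ext}^1_\Z(\ol{\pmb\omega},\Z G)=0.$  Applying $\Hom_\Z(\ul{\,\;},\Z G)$ to the envelope \eqref{Eq0.1} therefore gives $\Hom_\Z(\pmb\omega,\Z G) = \Hom_\Z(\ol{\pmb\omega},\Z G) \simeq \ol{\pmb\omega}{}^*\otimes \Z G,$ and the twist above makes the right-hand side $\Z G$-free.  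Consequently $\Hom_\Z\bigl(\pmb\omega,(\Z G)^{m(\vert S\vert -1)}\bigr)$ is $\Z G$-free, and the long exact sequence of $G$-cohomology obtained by applying $\Hom(\pmb\omega,\ul{\,\;})$ to the displayed $\Z$-split sequence collapses at $H^1$ into the required isomorphism $\beta^\pr_*.$

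Finally, given $(M^\pr)$ with extension class $\xi$ satisfying $\beta^\pr_*(\xi)=\varepsilon^{(1)},$ I push out along $\beta^\pr$ to obtain the extension $(M)$ of Theorem \ref{Thm2} (the pushout class is $\beta^\pr_*(\xi)=\varepsilon^{(1)},$ matching the Theorem's hypothesis).  The standard snake-lemma analysis of the pushout square produces a short exact sequence $0\to M^\pr\to M\to (\Z G)^{m(\vert S\vert -1)}\to 0,$ which splits because the cokernel is $\Z G$-free.  Thus $M\simeq M^\pr\oplus (\Z G)^{m(\vert S\vert -1)},$ and Theorem \ref{Thm2} gives $M^\pr\oplus (\Z G)^{m(\vert S\vert -1)}$ stably isomorphic to $E\oplus (\Z G)^n.$  Cancelling $(\Z G)^{m(\vert S\vert -1)}$ yields $M^\pr$ stably isomorphic to $E\oplus (\Z G)^{n^\pr}$ with
$n^\pr = n-m(\vert S\vert -1) = (d(G)-1)(\vert S\vert -1)+w,$ as claimed.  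The main obstacle is the verification that $\beta^\pr_*$ is an isomorphism, i.e., that the free cokernel of $\beta^\pr$ is invisible to $H^1\bigl(G,\Hom(\pmb\omega,\ul{\,\;})\bigr);$ once this is in hand, the pushout reduction to Theorem \ref{Thm2} is straightforward.
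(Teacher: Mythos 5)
Your proposal is correct and follows essentially the same route as the paper: tensoring \eqref{Eq4.2} with $\Delta S$ to define $\beta^\pr$ with $\Z G$-free cokernel $(\Z G)^{n-n^\pr},$ pushing out $(M^\pr)$ along $\beta^\pr$ to recover the extension $(M)$ of Theorem \ref{Thm2}, extracting $0\to M^\pr\to M\to (\Z G)^{n-n^\pr}\to 0,$ and cancelling free summands at the level of stable isomorphism. Your explicit verification that $\beta^\pr_*$ is an isomorphism (via $\Hom(\pmb\omega,\Z G)=\Hom(\ol{\pmb\omega},\Z G)$ being $\Z G$-free) is a correct elaboration of a point the paper leaves implicit.
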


\begin{proof}
By $L_2 = \Delta  G\otimes L_1 =\Delta  G\otimes (\Delta  G\otimes
\Delta  S)\simeq (\Delta  G\otimes \Delta  G)\otimes \Delta  S,$ applying
$\ul{\,\;}\, \otimes \Delta  S$ to \eqref{Eq4.2} gives the exact
sequence
\begin{equation}\label{Eq4.3}
0\to R\otimes \Delta  S\os{\beta  ^\pr}\to L_2 \to (\Z G)^{n-n^\pr} \to 0,
\end{equation}
defining $\beta  ^\pr.$  This follows from $(\Z G)^m \otimes \Delta 
S\simeq (\Z G)^{m(\vert  S\vert  -1)}$ with $m(\vert  S\vert 
-1)=n-n^\pr.$

Now the extension class of the 1-extension $(M^\pr)$ has the
property that its pushout along $\beta  ^\pr$ has extension class
$\varepsilon  ^{(1)}$ so there is a commutative diagram
$$
\begin{array}{rccccl} 
0\rightarrow &R\otimes \Delta  S &\rightarrow &M^\pr &\rightarrow
&\pmb\omega  \to 0\\
&\longdownarrow\, ^{\beta  ^\pr}&&\longdownarrow &&\Vert  \\
0\rightarrow &L_2 &\rightarrow &M&\rightarrow &\pmb\omega 
\rightarrow 0.
\end{array}
$$
Since $\beta  ^\pr$ has cokernel $(\Z G)^{n-n^\pr}$ so does the middle
arrow, hence there's an exact sequence $0\to M^\pr \to M\to (\Z
G)^{n-n^\pr} \to 0.$  Thus, by the Theorem, $E\oplus (\Z G)^n\approx
M\approx M^\pr \oplus (\Z G)^{n-n^\pr},$ which implies that $E\oplus
(\Z G)^{n^\pr} \approx M^\pr.$
\end{proof}

\begin{remark}\label{Rem4.4}
$R$ has no non-zero projective summand if $G$ is solvable or, more
generally, when $G$ has generation gap $=0$ (cf.\,(24) in \cite{G}),
in which case we cannot expect bigger $\Z G$-free summands from
the above approach.  
Note that $R_d$ is determined up to stable isomorphism by $d,$ as
follows from \eqref{Eq4.1} by Schanuel's lemma.
 Corollary \ref{Cor4.1} is a first step
toward the important goal of excising  as many $\Z G$-free
summands of $M$ as explicitly as possible.  There are many aspects
of this problem but still no
systematic approach.
\end{remark}

There has been considerable work on
Chinburg's conjecture as a special case of the  
Equivariant Tamagawa Number Conjecture; a recent reference is
\cite{B} (cf. Corollary 2.8 and Remark 2.9).  Since Chinburg's
conjecture predicts that $\Omega  _m=0$ whenever $G$ has no
irreducible symplectic representation (cf. \S3 of \cite{C2}), an
envelope $\pmb\omega  $ of $\pmb \mu$ with $[ \pmb\omega  ] -
w[\Z G]=0$ and $w =d(G) $ (cf. \cite{R}) is a useful ingredient
for examples. 

On the other hand, the condition \eqref{Eq0.2} on $\pmb\omega  $ could be replaced in
the Theorem by
$$
[\pmb\omega  ] - w[\Z G] \equiv \Omega_m\mod B[\varepsilon  _1],
$$
as the appeal to Theorem 4.7 of \cite{GW2} in its proof 
shows.  This shows that the full strength of Chinburg's conjecture
may not be needed.

\begin{remark}\label{Rem4.6}
The emphasis on $\varepsilon  _1$ in \cite{GW2} comes from the
envelope focus. In particular, Theorem~A for $\varepsilon  _1$ is
proved by this method, but its statement depends on the local and
global invariant maps on $H^2,$ where $\varepsilon  $ becomes more
central. Theorem~A can be translated from $\varepsilon  _1$ to
$\varepsilon  $ by using the formalism of \cite{T1}, in the
direction of the last paragraph of the Remark on p.~971 of
\cite{GW2}.
\end{remark}

More precisely, let $\Hom\big((\Z S),(J)\big)$ be the $G$-module
consisting of all triples $(f_1,f_2,f_3)$ of $\Z$-homomorphisms so
that the diagram
$$
\begin{array}{ccccccccc}
0 &\to &\Delta S  &\to &\Z S &\to &\Z &\to &0\\
&&^{f_1}\!\!\longdownarrow\,{} &&^{f_2}\!\!\longdownarrow\,{}
&&^{f_3}\!\!\longdownarrow\,{}\\
0 &\to &E &\to &J &\to &C_K &\to &0
\end{array}
$$
commutes. This leads to an exact sequence

\vs\noi
\hbox{\small{$
0\!\!\to\!\! H^2\big(G,\Hom((\Z S),(J))\big)\!\to\! H^2\big(G,\Hom(\Delta 
S,E)\big)\!\oplus\! H^2\big(G,\Hom(\Z S,J)\big)\! \to\!
H^2\big(G,\Hom(\Delta  S,J)\big)\!\!\to\!\! 0
$}}

\vs\noi
allowing us to study the trace character $T_E:H^2\big(G,\Hom(\Delta 
S,E)\big)\to \Qbar/\Z$ defined by dimension shifting $t_E$ using the
exact sequence \eqref{Eq1.1}.  This implies that $\varepsilon 
=T_E\circ j_*,$ with $j:\pmb \mu  \hookrightarrow E$ the inclusion, but now the
point is that $T_E$ can be described in terms of the $H^2$-sequence
above without further dimension shifting.

More precisely, given $x\in H^2\big(G,\Hom(\Delta  S,E)\big),$ there
exists $y\in H^2\big(G,\Hom(\Z S,J)\big)$ so  $(x,y)$ maps to
$0$ in $H^2\big(G,\Hom(\Delta  S,E)\big),$ hence there is a unique $T\in
H^2\big(G,\Hom\big((\Z S),(J)\big)\big)$ mapping to $(x,y).$  Taking a 2-cocycle of triples
representing $T$ and projecting on the third component gives a
2-cocycle defining $z\in H^2\big(G,\Hom(\Z,C_K)\big).$  Then (cf.
\cite{R})
\begin{equation}\label{Eq4.4}
T_E(x) =\;\text{inv}(z) -\sum_{{\fp}\in
S_*}\,\text{inv}_{\fp}(y_{\fp}),
\end{equation}
where $S_*$ is a transversal to the $G$-orbits on $S,y_{\fp} =
k_{\fp}(\text{res}\;y)i_{\fp}$ with $k_{\fp}:J\to K^\times_{\fp}$ the
projection and
$i_{\fp}:\Z \to \Z S$ with $i_{\fp}(1)={\fp}.$

This description has the weakness that the existence of $y$
apparently depends on the vanishing of
$H^3(G,J).$  This situation is improved by the

\begin{lemma}\label{Lem4.5}
The map $H^2\big(G,\Hom(\Z S,J)\big)\to H^2\big(G,\Hom(\Delta 
S,J)\big)$ has a special splitting.
\end{lemma}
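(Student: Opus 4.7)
The plan is to construct the splitting prime-by-prime, exploiting the canonical $G_\fp$-equivariant splittings of the augmentation sequence available at each $\fp \in S_*$. Since $\fp$ is fixed by $G_\fp$, the map $i_\fp: \Z \to \Z S$, $1 \mapsto \fp$, is $G_\fp$-equivariant and splits the augmentation on restriction to $G_\fp$, giving the $G_\fp$-decomposition $\Z S = \Delta S \oplus \Z\fp$. Dualizing yields a $G_\fp$-decomposition $\Hom(\Z S, J) \cong J \oplus \Hom(\Delta S, J)$, whence the exact sequence $0 \to J \to \Hom(\Z S, J) \to \Hom(\Delta S, J) \to 0$ is split upon restriction to $G_\fp$, with section $\sigma_\fp: \psi \mapsto \psi \circ r_\fp$, where $r_\fp(\fq) = \fq - \fp$.

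Next I would use the $G$-decomposition $\Z S = \bigoplus_{\fp \in S_*} \Z[G/G_\fp]$ and Shapiro's lemma to identify
$$H^2\bigl(G,\Hom(\Z S, J)\bigr) \;\cong\; \bigoplus_{\fp \in S_*} H^2(G_\fp, J),$$
where the $\fp$-component of a class $y$ is $(\text{res}_{G_\fp} y)\circ i_\fp$, matching the formula for $y_\fp$ appearing in (4.4). The special splitting $s: H^2(G, \Hom(\Delta S, J)) \to H^2(G, \Hom(\Z S, J))$ is then to be defined component-wise: given $x$ represented by a 2-cocycle $c$, for each $\fp \in S_*$ form the cochain $\tilde c(g,h)(\fq) := c(g,h)(\fq - \fp)$, and observe that $\tilde c|_{G_\fp^2}$ is automatically a 2-cocycle in $Z^2(G_\fp, \Hom(\Z S, J))$, because the obstruction to being a cocycle computes (by a short calculation) to $g_1\cdot c(g_2,g_3)(g_1^{-1}\fp - \fp)$, which vanishes when $g_1 \in G_\fp$. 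This $G_\fp$-cocycle, interpreted through the Shapiro identification, provides the $\fp$-component $s(x)_\fp \in H^2(G_\fp, J)$.

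The main obstacle I anticipate is that the naive value $\tilde c(g,h)(\fp) = 0$ makes the Shapiro projection yield zero; therefore the correct extraction of $s(x)_\fp$ cannot be a bare evaluation at $\fp$ but must record the local class-field-theoretic content at $\fp$ carried by $\tilde c|_{G_\fp^2}$ in a nontrivial way. A promising route is to work through the triple-Hom module $\Hom((\Z S),(J))$ introduced just before the Lemma, so that the lift $y$ is constructed simultaneously with a lift $x_E \in H^2(G, \Hom(\Delta S, E))$ through $E \hookrightarrow J$; the compatible triple then forces a canonical choice of $s(x)_\fp$ independent of any $H^3(G, J)$-vanishing hypothesis. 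The verification that $\pi \circ s = \id$ would finally reduce to checking that $\pi(s(x))$ and $x$ agree after restriction to each $G_\fp$, which by the $G_\fp$-splitting and Shapiro suffices to recover them globally.
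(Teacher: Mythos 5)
There is a genuine gap, and it is exactly the one you flag yourself: your splitting is never actually constructed. Decomposing $\Z S=\bigoplus_{\fp\in S_*}\Z[G/G_\fp]$ and applying Shapiro in the \emph{first} variable does identify the source $H^2\big(G,\Hom(\Z S,J)\big)$ with $\bigoplus_{\fp\in S_*}H^2(G_\fp,J)$, but it does nothing for the target, because $\Delta S$ is not a sum of induced modules; so there is no natural way to extract from $x\in H^2\big(G,\Hom(\Delta S,J)\big)$ a component in $H^2(G_\fp,J)$. Your candidate fails for the reason you compute: the $G_\fp$-cocycle $\tilde c=c\circ r_\fp$ composed with the Shapiro projection (evaluation at $\fp$) is identically zero, since $r_\fp(\fp)=\fp-\fp=0$. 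The rescue you then sketch, constructing $y$ together with a compatible triple in $\Hom\big((\Z S),(J)\big)$, is circular: producing such a triple is precisely the lifting problem whose dependence on $H^3(G,J)$ the Lemma is designed to eliminate, so it cannot be an input to the proof of the Lemma.

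The missing idea in the paper's proof is to decompose the \emph{coefficient} module rather than the source lattice. Up to a cohomologically trivial component of unit ideles (this is where largeness of $S$ enters), $J$ is the product over $\fp\in S_*$ of $V_\fp=\prod_{\frak q}K^\times_{\frak q}$, and $V_\fp$ is identified with the coinduced module $\text{coind}\,K^\times_\fp$ from $G_\fp$ to $G$. Shapiro's lemma applied in the \emph{second} variable gives isomorphisms $H^2\big(G,\Hom(D,\text{coind}\,K^\times_\fp)\big)\to H^2\big(G_\fp,\Hom(\text{res}\,D,K^\times_\fp)\big)$, natural in the lattice $D$, so the map to be split becomes, component by component, $a^*:H^2\big(G_\fp,\Hom(\text{res}\,\Z S,K^\times_\fp)\big)\to H^2\big(G_\fp,\Hom(\text{res}\,\Delta S,K^\times_\fp)\big)$ --- crucially with the full Hom-coefficients retained at the local level, not evaluated at $\fp$. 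Now the very map you wrote down in your first paragraph, $r_\fp=\lambda_\fp:d\mapsto d-a^\pr(d)\fp$ with $\lambda_\fp\circ a=\id_{\Delta S}$, splits this $a^*$ directly as a map of $G_\fp$-coefficient modules, since it is $G_\fp$-equivariant. So your local splitting is the right one; what fails is the globalization, which must go through the product decomposition of $J$ rather than the direct-sum decomposition of $\Z S$.
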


\begin{proof}
The $S$-idele group $J$ is a finite product, over ${\fp}\in S_*\,,$ of components
$V_{\fp}:= \prod_{\frak q}K^\times_{\frak q}\,,$ with ${\frak q}$ running through the
$G$-orbit of ${\fp},$ up to a large cohomologically trivial
component of unit ideles.  So it suffices to show that
$H^2\big(G,\Hom(\Z S,V_{\fp})\big)\to H^2\big(G,\Hom (\Delta 
S,V_{\fp})\big)$ is split for each ${\fp}.$ 

If $H$ is a subgroup of $G,$ and $B$ any $H$-module, define the
coinduced $G$-module $\text{coind}\,(B),$ from $H$ to $G,$ to be
$\Hom_{\Z H}(\Z G,B)$ with $g\in G$ acting on elements $\varphi  $
by $(g\varphi  )(z)=\varphi  (zg)$ for all $z\in \Z G$ (cf VII, \S5
of \cite{S}).  If $D$ is any $\Z G$-lattice, viewing
$\Hom\big(D,\text{coind}\,(B)\big)$ as $G$-module and
$\Hom(\text{res}\, D,B)$ as $H$-module, both by diagonal action,
then there are natural Shapiro isomorphisms
$$
H^n\big(G,\Hom(D,\text{coind}\, B)\big) \to
H^n\big(H,\Hom(\text{res}\, D,B)\big).
$$
Take $H=G_{\fp}\,,$ \, $B=K^\times_{\fp}$ and identify
$\text{coind}\,K^\times_{\fp}$ with $V_{\fp}\,,$ via $\varphi 
\mapsto \prod_t\big(t\!\cdot\varphi  (t^{-1}\big),$ with $t$ a choice
of representatives of $G/G_{\fp}\,.$  This choice doesn't matter,
since $(th)\!\cdot \varphi  \big((th)^{-1}\big) =
t\!\cdot\big(h\!\cdot \varphi  (h^{-1}t^{-1})\big) = t\!\cdot\varphi
 (t^{-1})$ for $h\in G_{\fp}\,.$  The map is bijective, since the
components $tK^\times_{\fp}$ of $V_{\fp}$ are disjoint, and is a
$G$-homomorphism because $g\big(\prod_t(t\!\cdot\varphi  (t^{-1})\big) =
\prod_t\big((gt)\!\cdot \varphi  (gt)^{-1}g)\big) =
\prod_t\big((gt)\!\cdot (g\varphi  )(gt)^{-1})\big)
=\prod_t\big(t\!\cdot(g\varphi  )(t^{-1})\big).$

This identifies our map of the first paragraph with the top row of
the commutative square
\begin{equation}\label{Eq4.5}
\begin{array}{ccc}
H^2\big(G,\Hom(\Z S,\text{coind}\, K^\times_{\fp})\big)
&\os{a^*}\to &H^2\big(G,\Hom(\Delta  S,\text{coind}\, K^\times_{\fp})\big)\\
^{sh}\!\!\longdownarrow\,{} &&^{sh}\!\!\longdownarrow\,{}\\
H^2\big(G_{\fp},\Hom(\text{res}\, \Z S,K^\times_{\fp})\big)
&\os{a^*}\to &H^2\big(G_{\fp}\,,\Hom(\text{res}\,\Delta 
S,K^\times_{\fp})\big),
\end{array}
\end{equation}
with vertical isomorphisms, and horizontal maps from $0\to \Delta 
S\os{a}\hookrightarrow \Z S\os{a^\pr}\to \Z\to 0.$  This exact sequence is $G_{\fp}$-split, by
the $G_{\fp}$-map $\lambda  _{\fp}: d\mapsto d-a^\pr(d){\fp}$ having
$\lambda  _{{\fp}}\circ a =\,\text{id}_{\Delta  S}\,.$  Thus
$\lambda  _{\fp}$ induces $H^2\big(\Hom(\text{res}\, \Delta 
S,K^\times_{\fp})\big)\to H^2\big(G_{\fp}\,,\Hom(\text{res}\, \Z
S,K^\times_{\fp})\big)$ splitting the bottom $a^*$ of the
commutative square, which then completes the argument.
\end{proof}

\vsk

\vsk
\noi
{\small\begin{tabular} {ll}\\
A. Weiss &\q\q\q\q\q\q\q D. Riveros\\
Dept. of Mathematical &\q\q\q\q\q\q\q
Dept. of Mathematical  \\
{} \q \& Statistical Sciences &\q\q\q\q\q\q\q \q
 \& Statistical Sciences\\
University of Alberta &\q\q\q\q\q\q\q
University of Alberta\\
Edmonton, Alberta 
&\q\q\q\q\q\q\q Edmonton, Alberta\\
Canada \q T6G 2G1 &\q\q\q\q\q\q\q
Canada \q T6G 2G1\\
weissa@ualberta.ca &\q\q\q\q\q\q\q riverosp@ualberta.ca\\
\end{tabular}
\end{document}